\theoremstyle{plain} 
\newtheorem{theorem}{Theorem}%[section]
\theoremstyle{definition} 
\theoremstyle{definition} 
\newtheorem*{ex*}{Example}
\theoremstyle{remark} 
\theoremstyle{remark} 
\newtheorem{remark}[theorem]{Remark}
\newtheorem*{remark*}{Remark}
\newcommand{\mathsym}[1]{{}}
\newcommand{\unicode}[1]{{}}
\newcommand{\iincludegraphics}[1]{{}}
\renewcommand{\le}{\leqslant}
\renewcommand{\ge}{\geqslant}
\newcommand{\ii}[1]{\operatorname{I}\left\{#1\right\}}
\newcommand{\ch}{\operatorname{ch}}
\newcommand{\sh}{\operatorname{sh}}
\newcommand{\R}{\mathbb{R}}
\newcommand{\E}{\operatorname{\mathsf{E}}}
\renewcommand{\P}{\operatorname{\mathsf{P}}}
\newcommand{\de}{\delta}
\newcommand{\si}{\sigma}
\newcommand{\td}{\tilde d}
\newcommand{\p}[1]{\partial_{#1}}
\begin{document}

\begin{frontmatter}

\title{An exact bound on the truncated-tilted mean for symmetric distributions
}
\runtitle{Truncated-tilted mean, symmetric case
}
\date{\today}

\begin{aug}
\author{\fnms{Iosif} \snm{Pinelis}\ead[label=e1]{ipinelis@mtu.edu}}
%\author{\fnms{Raymond} \snm{Molzon}\ead[label=e2]{remolzon@mtu.edu}}
\runauthor{Iosif Pinelis}

\affiliation{Michigan Technological University}

\address{Department of Mathematical Sciences\\
Michigan Technological University\\
Houghton, Michigan 49931, USA\\
E-mail: \printead[ipinelis@mtu.edu]{e1}
%E-mail: \printead[remolzon@mtu.edu]{e1}
}
\end{aug}

\begin{abstract}
An exact upper bound on the Winsorised-tilted mean, \break 
$%\E_{h,w}X:=
\frac{\E Xe^{h(X\wedge w)}}{\E e^{h(X\wedge w)}}$, of a symmetric random variable $X$ in terms of its second moment is given. 
Such results are used in work on nonuniform Berry--Esseen-type bounds for general nonlinear statistics. 
\end{abstract}

% 60E15    	Inequalities; stochastic orderings
%  60E10    	Characteristic functions; other transforms
% 60F10    	Large deviations
%  60F05    	Central limit and other weak theorems

\setattribute{keyword}{AMS}{AMS 2010 subject classifications:}
 
\begin{keyword}[class=AMS]
\kwd[Primary ]{60E15}
%\kwd{62G10}
%%\kwd{60G42}
%\kwd{60G50}
\kwd[; secondary ]{60E10}
%\kwd{60F10}
\kwd{60F05}
%\kwd{60G48}
%\kwd{60G51}
\end{keyword}
\begin{keyword}
%\kwd{exponential moments}
\kwd{exact upper bounds}
\kwd{Winsorization}
\kwd{truncation}
%\kwd{large deviations}
\kwd{nonuniform Berry-Esseen bounds}
\kwd{Cram\'er tilt transform}
\kwd{symmetric distributions}
%\kwd{Bayes posterior mean}
\end{keyword}

\end{frontmatter}

\settocdepth{chapter}

%\tableofcontents 
%{\small\tableofcontents} 

\settocdepth{subsubsection}

%\section{Introduction}\label{intro}

Cram\'er's tilt transform of a random variable (r.v.) $X$ is a r.v.\ $X_h$ such that 
\begin{equation*}%\label{eq:tilt} 
	\E f(X_h)=\frac{\E f(X)e^{h\,X}}{\E e^{h\,X}}
\end{equation*}
for all nonnegative Borel functions $f$, where $h$ is a real parameter. This transform is an important tool in the theory of large deviation probabilities $\P(X>x)$, where $x>0$ is a large number; then the appropriate value of the parameter $h$ is positive.  
Unfortunately, if the right tail of the distribution of $X$ decreases slower than exponentially, then $\E e^{h\,X}=\infty$ for all $h>0$ and thus the tilt transform is not applicable. 
The usual recourse then is to replace $X$ in the exponent by its truncated counterpart, say $X\ii{X\le w}$ or $X\wedge w$, where $w$ is a real number. 
As shown in \cite{winzor,nonlinear}, of the two mentioned kinds of truncation, it is the so-called Winsorization, $X\wedge w$, of the r.v.\ $X$ that is more useful in the applications considered there. 

In particular, in \cite{nonlinear} one needs a good upper bound on the mean 
\begin{equation}\label{eq:E_h}
	\E_{h,w}X:=\frac{\E Xe^{h(X\wedge w)}}{\E e^{h(X\wedge w)}}. 
\end{equation}
of the Winsorised-tilted distribution of $X$. 
Note that $\E_{h,w}X$ is well defined and finite for any $h\in(0,\infty)$, any $w\in\R$, and any r.v.\ $X$ such that $\E(0\vee X)<\infty$. 

In \cite{winzor}, exact upper bounds on the denominator $\E e^{h(X\wedge w)}$ of the ratio in \eqref{eq:E_h} were provided, along with  
%, which effected %resulted in 
%a significant improvement on previously obtained upper bounds on $\E_{h,w}X$ (also, \cite{winzor} contained
applications to pricing of certain financial derivatives. 
%However, before the present study, the numerator of the ratio in \eqref{eq:E_h} was bounded separately from the denominator, which entailed a serious loss in accuracy. 

Take any positive real numbers $h$ and $w$. 
In \cite{pin11tilt}, exact upper bounds on $\E_{h,w}X$ given the first two moments of $X$. In particular, by \cite[Theorem 2.4(II)]{pin11tilt}, 
\begin{equation}\label{eq:asymm}
	\E_{h,w}X<\frac{e^{hw}-1}w\,\E X^2  
\end{equation}
for any real-valued r.v.\ with $\E X=0$ and $\E X^2\in(0,\infty)$; it is also shown in \cite{pin11tilt} that the factor $\frac{e^{hw}-1}w$ in \eqref{eq:asymm} is the best possible one.  

The purpose of this note is to show that in the case when (the distribution of) $X$ is symmetric, the factor $\frac{e^{hw}-1}w$ in \eqref{eq:asymm} can be improved to $\frac{\sh hw}w$; we write $\sh$ and $\ch$ in place of $\sinh$ and $\cosh$. 

\begin{theorem}\label{th:}
Let $X$ be any \emph{symmetric} real-valued r.v.\ with $\E X^2\in(0,\infty)$. Then 
\begin{equation}\label{eq:}
	0<\E_{h,w}X<\frac{\sh hw}w\,\E X^2. 
\end{equation}
\end{theorem}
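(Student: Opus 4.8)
\emph{The plan} is to use symmetry to rewrite $\E_{h,w}X$ as a ratio of expectations of functions of $Y:=|X|$ alone, and then to control that ratio by combining two complementary pointwise inequalities through a monotone‑rearrangement (Chebyshev correlation) argument. First, for symmetric $X$ and Borel $g\ge0$ one has $\E g(X)=\tfrac12\E[g(X)+g(-X)]=\E\,\tilde g(Y)$ with $\tilde g(y):=\tfrac12[g(y)+g(-y)]$; applying this to $g(x)=xe^{h(x\wedge w)}$ and to $g(x)=e^{h(x\wedge w)}$, and using $(-y)\wedge w=-y$ for $y\ge0$, I get
\[\E_{h,w}X=\frac{\E[Y\,a(Y)]}{\E\,b(Y)},\qquad a(y):=\tfrac12\bigl(e^{h(y\wedge w)}-e^{-hy}\bigr),\qquad b(y):=\tfrac12\bigl(e^{h(y\wedge w)}+e^{-hy}\bigr),\]
so $a(y)=\sh hy,\ b(y)=\ch hy$ on $[0,w]$ and $a(y)=\tfrac12(e^{hw}-e^{-hy}),\ b(y)=\tfrac12(e^{hw}+e^{-hy})$ on $[w,\infty)$. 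All expectations are finite (since $\E X^2<\infty$ gives $\E Y<\infty$), $\E\,b(Y)>0$ because $b>0$, and $a(y)>0$ for $y>0$ (as $y\wedge w>0>-y$ there); since $\P(Y>0)>0$ this already gives the left‑hand bound $\E_{h,w}X>0$.

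Next I would establish, with $c:=\sh hw/w$, the pointwise inequalities, valid for all $y\ge0$,
\[\text{(A)}\quad y\,a(y)\le c\,y^2 b(y),\qquad\qquad \text{(B)}\quad y\,a(y)\le c\,y^2,\]
each strict for every $y>0$ except that (B) is an equality at $y=w$. For (B) it suffices to check that $a(y)/y$ increases on $(0,w]$ to the value $c$ at $y=w$ and strictly decreases on $[w,\infty)$, the decrease coming from $(1+hy)e^{-hy}<1\le e^{hw}$. Inequality (A) is equivalent to $a(y)/(y\,b(y))\le h$ (and $h<c$ since $\sh hw>hw$): on $(0,w]$ it is $\tanh(hy)\le hy$, and on $[w,\infty)$, writing $t=hy>hw=:\beta$, it becomes $e^{\beta}(1-t)\le e^{-t}(1+t)$, which is trivial for $t\ge1$ and, for $t\in(0,1)$, follows from $e^{\beta+t}<e^{2t}\le\tfrac{1+t}{1-t}$ (the last step because $t\mapsto\tfrac{1+t}{1-t}e^{-2t}$ is nondecreasing on $[0,1)$ with value $1$ at $0$).

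Then I glue these together. Put $\beta_*(y):=\min\{1,b(y)\}$, so that (A) and (B) give $y\,a(y)\le c\,y^2\beta_*(y)$ for all $y\ge0$, with equality exactly at $y\in\{0,w\}$. The key structural fact is that $b$ is unimodal — increasing on $[0,w]$ with $b(0)=1$, strictly decreasing on $[w,\infty)$ — so $\beta_*$ is constant ($=1$) on an initial interval and strictly decreasing afterwards, hence nonincreasing; since $y\mapsto y^2$ is nondecreasing, Chebyshev's correlation inequality gives $\E[Y^2\beta_*(Y)]\le\E[Y^2]\,\E\beta_*(Y)$, while $\E\beta_*(Y)\le\E\,b(Y)$ trivially. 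Hence
\[\E[Y\,a(Y)]\;\le\; c\,\E[Y^2\beta_*(Y)]\;\le\; c\,\E[Y^2]\,\E\beta_*(Y)\;\le\; c\,\E[X^2]\,\E\,b(Y),\]
i.e.\ $\E_{h,w}X\le c\,\E X^2$. To get strictness: if equality held, all three displayed inequalities would be equalities, the first forcing $Y\in\{0,w\}$ a.s.\ and the third forcing $b(Y)\le1$ a.s.; but $\{y\ge0:b(y)\le1\}\cap\{0,w\}=\{0\}$ (because $\ch hw>1$), so $Y=0$ a.s., contradicting $\E X^2>0$.

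\emph{The main obstacle} is that the denominator $\E\,b(Y)=\E e^{h(X\wedge w)}$ need not be $\ge1$ — it can be as small as $e^{hw}/2<1$ when $hw<\ln2$ — so the crude chain $\E[Y a(Y)]\le c\,\E X^2\le c\,\E X^2\,\E\,b(Y)$ simply fails. Inequality (A), carrying the extra factor $b$, together with the monotonicity of $\min\{1,b(\cdot)\}$, is precisely the device that lets one reconstitute the product $\E[Y^2]\,\E\,b(Y)$ from a single pointwise estimate; establishing (A) on $[w,\infty)$ — the elementary inequality $e^{\beta}(1-t)\le e^{-t}(1+t)$ for $0<\beta<t$ — is the one genuinely computational step.
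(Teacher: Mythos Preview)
Your proof is correct, and it is genuinely different from—and much shorter than—the paper's argument.  Both begin with the same symmetrization, writing $\E_{h,w}X=\E[Y\,a(Y)]/\E\,b(Y)$ with $Y=|X|$ (your $a,b$ are the paper's $g_1/y,\,g_0$).  From there the paper takes two independent copies $U,V$ of $Y$ and reduces to the two–variable pointwise inequality
\[
u\,a(u)+v\,a(v)<c\bigl(b(u)\,v^{2}+b(v)\,u^{2}\bigr)\qquad(u,v>0),
\]
which it then establishes through a three–case split ($w\le u\le v$; $u\le w\le v$; $u\le v\le w$), repeated concavity and monotonicity arguments in $u$ and $v$, and several auxiliary exponential–polynomial inequalities handled by Mathematica's \texttt{Reduce}.

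Your route stays one–dimensional.  You prove the two elementary pointwise bounds $y\,a(y)\le c\,y^{2}$ and $y\,a(y)\le c\,y^{2}b(y)$, combine them as $y\,a(y)\le c\,y^{2}\beta_{*}(y)$ with $\beta_{*}=\min\{1,b\}$, and then invoke Chebyshev's correlation inequality, using only the observation that $b$ is unimodal (increasing on $[0,w]$ with $b(0)=1$, decreasing on $[w,\infty)$), so that $\beta_{*}$ is nonincreasing while $y\mapsto y^{2}$ is nondecreasing.  The single genuine computation is $(1-t)e^{2t}\le 1+t$ on $[0,1)$, which you dispatch in one line.  Your strictness argument (equality would force $Y\in\{0,w\}$ a.s.\ and $b(Y)\le1$ a.s., hence $Y=0$ a.s.) is also clean.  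In short: the paper's approach is self-contained but computational; yours trades the case analysis and computer verification for one standard correlation inequality and a transparent monotonicity fact.
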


\begin{remark}
The factor $\frac{\sh hw}w$ in \eqref{eq:} is the best possible one. More specifically, 
\begin{equation*}
	\lim_{\si\downarrow0}\,\frac1{\si^2}\,\sup\big\{\E_{h,w}X\colon\E X^2=\si^2,\ X\text{ is symmetric}\big\}=\frac{\sh hw}w. 
\end{equation*} 
In view of Theorem~\ref{th:}, this follows if we let $X$ take values $-w$, $0$, and $w$ with probabilities $\frac{\si^2}{2w^2}$, $1-\frac{\si^2}{w^2}$, and $\frac{\si^2}{2w^2}$, respectively, for $\si\in(0,w)$, and then let $\si\downarrow0$. 
Note here that the case of interest in applications in \cite{nonlinear} is precisely when $\E X^2$ is arbitrarily small. 
Also, in those applications $hw$ may be rather large, and then the symmetric-case factor $\frac{\sh hw}w$ will be almost twice as small as the general zero-mean-case factor $\frac{e^{hw}-1}w$. 
\end{remark}

\begin{proof}[Proof of Theorem~\ref{th:}] 
%Without loss of generality (w.l.o.g.), $\E X^2<\infty$, so that $\E X=0$. 
By \cite[Proposition~2.6(II)]{pin11tilt}, $\E_{h,w}X$ is increasing in $h>0$, so that $\E_{h,w}X>\E X=0$,  
%In fact, the condition $w\in(i_X,s_X]\cap\R$ in \cite[Proposition~2.6(II)]{pin11tilt} can be relaxed to $w>i_X$, if one chooses any $c\in(i_X,w\wedge s_X)$ in the corresponding proof there. 
%So, $\E_{h,w}X$ is (strictly) increasing in $h>0$, 
and the first inequality in \eqref{eq:} follows. 

Let us prove the second inequality in \eqref{eq:}. 
By rescaling, without loss of generality (w.l.o.g.) $h=1$. 
For all real $x$ and $j\in\{0,1\}$, let 
\begin{equation*}
	f_j(x):=x^je^{x\wedge w}\quad\text{and}\quad 
	g_j(x):=\tfrac12\big(f_j(x)+f_j(-x)\big), 
\end{equation*}
using the convention $0^0:=1$;  
then 
\begin{equation}\label{eq:expec}
	\E g_j(|X|)=\E f_j(X)=\E X^je^{h(X\wedge w)}. %\quad\text{and}\quad\E e^{h(X\wedge w)}=\E f_0(X)=\E g_0(|X|). 
\end{equation}
So, \eqref{eq:} will follow if one can show that 
\begin{equation}\label{eq:ineq}
	d:=d(u,v,w):=2\big[g_1(u)+g_1(v)-\tfrac{\sh hw}w\,\big(g_0(u)v^2+g_0(v)u^2\big)\big]<0 
\end{equation}
for all positive real $u,v,w$; 
indeed, then it will be enough to replace $u$ and $v$ in \eqref{eq:ineq} by independent copies (say $U$ and $V$) of the r.v.\ $|X|$, take the expectation, and use \eqref{eq:expec}. 
At this point, one should note that $d$ may equal $0$ if $u$ or $v$ equals $0$; in particular, $d=0$ if $u=0$ and $v=w$; however, the condition $\E X^2\in(0,\infty)$ in Theorem~\ref{th:} implies that $|X|>0$ with a nonzero probability, which will result in the second inequality in \eqref{eq:} being strict indeed. 

So, it remains to prove the inequality \eqref{eq:ineq}. Since $u$ and $v$ are interchangeable there, w.l.o.g.\ $0<u\le v$. Then (at least) one of the following three cases must occur: 
\begin{enumerate}[{}]
	\item\emph{Case 1}: $0<w\le u\le v$; 
	\item\emph{Case 2}: $0<u\le w\le v$; 
	\item\emph{Case 3}: $0<u\le v\le w$.  
\end{enumerate}
In each of these three cases, $d$ can be expressed without using the minimum function $\wedge$. 

In the subsequent treatment of each of these three cases, the default ranges of the variables $u$, $v$, and $w$ will be determined by the conditions of the case under consideration. For instance, if in Case~1 (say) an expression in $u,v,w$ is stated to be concave in $u$ or increasing in $v$, this will mean that it is concave in $u\in[w,v]$ (for any given $v$ and $w$ such that $0<w\le v$) or, respectively, increasing in $v\in[u,\infty)$ (for any given $u$ and $w$ such that $0<w\le u$). 

As usual, let $\p z$ denote the operator of partial differentiation with respect to a variable $z$. 

\subsubsection*{Case~1}
In this case, 
\begin{equation*}
d=	(e^w-e^{-u})u+(e^w-e^{-v})v
-\tfrac{\sh w}w\, \big(e^w (u^2+v^2)+e^{-v}u^2+e^{-u} v^2\big),   
\end{equation*}
whence  
\begin{equation}\label{eq:d2,d3}
\p v^2d=e^{-v} (2 - v) - \tfrac{\sh w}w (e^{-v} u^2 + 2 e^{-u} + 2 e^w)
%\quad\text{and}\quad 
%\p v^3d=e^{-v} (v-3+\tfrac{u^2 \sh w}{w}).  	
\end{equation} 
and 
$\p v^3d=e^{-v} (v-3+\tfrac{u^2 \sh w}{w})$.  
So, $\p v^3d$ may change in sign at most once, and only from $-$ to $+$, if $v$ increases from $u$ to $\infty$. 
Therefore, 
\begin{equation}\label{eq:vee}
\p v^2d\le(\p v^2d)|_{v=u}\,\vee\,(\p v^2d)|_{v=\infty-}. 	
\end{equation}
Let $d_2:=d_2(u,w):=we^u\,(\p v^2d)|_{v=u}$. Then $d_2(u,0+)=0$ and 
$\p w d_2=\break
-2(e^{u + 2 w}-1) - u - (2 + u^2) \ch w<0$, so that $d_2<0$ or, equivalently, 
$(\p v^2d)|_{v=u}<0$. It is also clear from \eqref{eq:d2,d3} that $(\p v^2d)|_{v=\infty-}<0$. 
So, by \eqref{eq:vee}, $\p v^2d<0$ and hence $d$ is strictly concave in $v\in[u,\infty)$. 

Therefore, in Case~1 it suffices to show that $d|_{v=u}<0$ and $(\p v d)|_{v=u}<0$. 
%%%
Introduce $\td:=e^{u+w}\,\frac wu\,d$. 
Then $\p u^2(\td|_{v=u})=2 e^{u + 2 w} \big(w - (2 + u) \sh w\big)<0$, since $\sh w>w$. 
So, $\td|_{v=u}$ is strictly concave in $u$. 
Further, $(\td|_{v=u})|_{u=w}=\break
-(e^w-1)^3 (1 + e^w) w<0$. % and, using the Mathematica command \verb9Reduce9, one can see that 

One can see that 
%\rule{0pt}{10pt}
\begin{equation}\label{eq:...}
	\big(\p u(\td|_{v=u})\big)|_{u=w}
=1 + e^{2 w} \big(w + 2 e^w w - e^{2 w} (1 + w)\big)<0 
\end{equation}
for all $w>0$. 
Such inequalities, of the form $P(w,e^w)<0$ for some polynomial $P$ of two variables, can be proved in a rather algorithmic manner. Indeed, let $n\ge1$ be the degree of $P$ in $w$. ``Solving'' the inequality $P(w,e^w)<0$ for $w^n$, one can rewrite it as $\de(w):=w^n-P_1(w,e^w)<0$ or $\de(w)>0$ (depending on the sign of the coefficient of $w^n$ in $P$), where $P_1$ is some polynomial of degree $\le n-1$ in $w$. Then $\de'(w)$ will be a polynomial of degree $\le n-1$ in $w$, so that one can proceed by induction, ultimately reducing the problem to one on the sign of a polynomial in $e^w$ only. 
One can use a computer algebra system to (such as Mathematica) to execute such routine calculations, which appears to be a much more reliable and faster way to deal with such matters. 
In Mathematica, algorithms for solving inequalities like \eqref{eq:...} are implemented in the command \verb9Reduce9, which we indeed use to verify \eqref{eq:...}, as well as a few other similar inequalities. 
Similar methods were used e.g.\ in \cite{radem-asymp}. 

It follows that \rule{0pt}{10pt}$\td|_{v=u}<0$ and hence indeed $d|_{v=u}<0$. Now (in Case~1) it only remains to verify that $d_1:=d_1(u,w):=w e^u(\p v d)|_{v=u}<0$. 

Using again the inequality $w<\sh w$ (together with the conditions $0<w\le u$ of Case~1), one observes that 
\begin{align*}
\p u^2 d_1&=2 \sh w + e^{u + w} (w - 2 (2 + u) \sh w) \\ 
&\le2 \sh w + e^{2w} (w - 2 (2 + w) \sh w), 	
\end{align*}
and the latter expression can seen to be negative for all $w>0$ -- using again the command \verb9Reduce9, say. So, $d_1$ is concave in $u$. 
Yet another \verb9Reduce9 shows that $d_1|_{u=w}<0$ for $w>0$. 
Moreover, 
\begin{equation*}
(\p u d_1)|_{u=w}e^{-w}/2=(w-\sh w)\ch w - (\ch w + 2w \sh w)\sh w<0; 	
\end{equation*}
here we again used the inequality $w<\sh w$. 
This implies that indeed $d_1<0$, which completes the proof of \eqref{eq:ineq} in Case~1.  

\subsubsection*{Case~2}
In this case, 
\begin{equation*}
d=	2 u \sh u+v (\sh v+\sh w+\ch w)-v \ch v
-\tfrac{\sh w}w\, \big(u^2 (e^{-v}+e^w)+2 v^2 \ch u\big),   
\end{equation*}
whence, introducing  
\begin{equation}\label{eq:d1}
	d_1:=we^v\,\p v d, 
\end{equation}
one has   
\begin{align*}
e^{-v}\,\p v^2\,d_1
&=w \ch w + (w - 4 (2 + v) \ch u) \sh w \\
&\le w \ch w + (w - 4 (2 + w)) \sh w<0; 	
\end{align*}
the last inequality here can be obtained via another \verb9Reduce9, and 
the penultimate inequality follows by the condition $w\le v$ of Case~2.  
So, 
\begin{equation}\label{eq:d1 conc}
	\text{$d_1$ is concave in $v$.} 
\end{equation}
Note that the definition \eqref{eq:d1} of $d_1$, used in the present Case~2, differs from the definition of $d_1$ used in Case~1. 

Next, $(\p v d_1)|_{v=w}=e^w \big(e^w w-4 (w+1) \ch u\, \sh w\big)+w$ is obviously decreasing in $u>0$. 
So, $(\p v d_1)|_{v=w}<(\p v d_1)|_{v=w,u=0+}=3 w-e^{2 w} (w+2)+2$, which yet another \verb9Reduce9 shows to be negative for all $w>0$. 
Thus, 
\begin{equation}\label{eq:Dd1vw}
	(\p v d_1)|_{v=w}<0. 
\end{equation}

Now let us show that $d_1|_{v=w}<0$. One has 
\begin{equation}\label{eq:d1=...}
	d_1|_{v=w}=d_{11}+d_{12}\quad\text{and}\quad
	d_{11}=d_{111}+d_{112}, 
\end{equation}
where 
\begin{align}
d_{11}&:=e^w w (\sh w+\ch w-3 \ch u \sh w)+(w-1) w, \notag \\
d_{12}&:=(u^2-e^w w \ch u)\sh w , \label{eq:d12}\\
d_{111}&:=	e^w w (\ch w-2 \sh w)+(w-1) w, \notag \\
d_{112}&:=	-3 (\ch u-1) e^w w \sh w. \notag  
\end{align}
It is obvious that $d_{112}<0$. Also, $d_{111}<0$ by another \verb9Reduce9. 
Next, \break 
$\frac1{2\sh w}\p u(d_1|_{v=w})=u - 2 e^w w \sh u$. 
If $u\ge1/2$ then (by the condition $u\le w$ of Case~2) $w\ge1/2$, whence $2 e^w w \sh u>\sh u>u$, so that $\p u(d_1|_{v=w})<0$. 
Therefore, the condition $\p u(d_1|_{v=w})=0$ would imply $u<1/2$ and also $e^w w \sh u=u/2$, and then 
$u^2 -e^w w \ch u<u^2 -e^w w \sh u
=u^2 -u/2<0$, so that (by \eqref{eq:d12}) $d_{12}<0$ and hence, by \eqref{eq:d1=...},  $d_1|_{v=w}<0$. 
That is, $d_1|_{v=w}<0$ whenever $\p u(d_1|_{v=w})=0$. 

So, to prove the inequality $d_1|_{v=w}<0$ it is enough to verify that 
\begin{align*}
d_1|_{v=w,u=0+}&=(w-1) w+e^w w (\ch w-3 \sh w)<0 \quad\text{and} \\ \tfrac1w\,d_1|_{v=w,u=w}&=w+(w+e^w) \sh w-e^w (4 \sh w-1) \ch w-1<0,  
\end{align*}
which again can be done using \verb9Reduce9. 
We conclude that indeed $d_1|_{v=w}<0$. 

Using also the earlier established conditions \eqref{eq:d1 conc} and \eqref{eq:Dd1vw}, 
%that $d_1$ is concave in $v$ and $(\p v d_1)|{v=w}<0$, 
as well as the Case~2 condition $v\ge w$, one has $d_1<0$. So, by \eqref{eq:d1}, $d$ is decreasing in $v$. 

To complete the consideration of Case~2, it remains to show that $d|_{v=w}<0$. Observe here that 
\begin{align*}
\tfrac12\,\p u(d|_{v=w})&=\sh u+u \ch u-2\,\tfrac{\sh w }{w}\,u \ch w-w \sh u \sh w \\
%\frac12\,\p u(d|_{v=w})&=\sh u+u \ch u-\frac{2 u \sh w \ch w}{w}-w \sh u \sh w \\
&<\sh u+u \ch u-2 u \ch w\le\sh u-u \ch u<0, 
\end{align*}
so that $d|_{v=w}$ is decreasing in $u>0$, whereas $d|_{v=w,u=0+}=0$. 
Thus, indeed $d|_{v=w}<0$, and \eqref{eq:ineq} is proved in Case~2 as well. 

\bigskip

It remains to consider 

\subsubsection*{Case~3} 
Note that $\frac{\sh w}w$ is increasing in $w>0$. So, by \eqref{eq:ineq}, $d$ is decreasing in $w\in[v,\infty)$, because $g_j(u)$ and $g_j(v)$ do not depend on $w$ as long as $w\ge u\vee v$. It follows that in Case~3 w.l.o.g.\ $w=v$. Thus, $0<u\le w=v$, so that Case~3 has been quickly reduced to the already considered Case~2. 

Now inequality \eqref{eq:ineq} and thereby Theorem~\ref{th:} are completely proved. 
\end{proof}

\bibliographystyle{acm}
%\bibliography{%C:/Program Files (x86)/MiKTeX 2.7/
%$HOME/bibtex/bib/base/citations}
%\bibliography{$HOME/bibtex/bib/base/citations}
%\bibliography{test}
\bibliography{C:/Users/Iosif/Dropbox/mtu/bib_files/citations}
%\bibliography{C:/Users/Iosif/Documents/mtu_home01-30-10/bib_files/citations}
%\bibliography{citations}
%\bibliography{C:/Program Files (x86)/MiKTeX 2.7/bibtex/bib/base/citations}
%\bibliography{$HOME/some/where/sample.bib}

\end{document}